\newtheorem{theorem}{Theorem}[section]
\newtheorem{lemma}[theorem]{Lemma}
\theoremstyle{definition}
\newtheorem{corollary}[theorem]{Corollary}
\theoremstyle{remark}
\newtheorem{remark}[theorem]{Remark}
\numberwithin{equation}{section}
\begin{document}

\title{Derivative formulas for Bessel, Struve and Anger--Weber functions}

%    Information for first author
\author{Robert E. Gaunt}
%    Address of record for the research reported here
\address{School of Mathematics, The University of Manchester, Manchester M13 9PL, UK}
\email{robert.gaunt@manchester.ac.uk}
%    Current address
%    \thanks will become a 1st page footnote.
\thanks{The author is supported by a Dame Kathleen Ollerenshaw Research Fellowship, and also acknowledges support form an EPSRC DPhil Studentship.}

%    General info
\subjclass[2000]{Primary 26A24, 33C10; Secondary 33C99}

\date{May 2017}

\keywords{Differentiation; Bessel functions; Struve functions; Anger--Weber functions}

\begin{abstract}
We derive formulas for the derivatives of general order for the functions $z^{-\nu}h_{\nu}(z)$ and $z^{\nu}h_{\nu}(z)$, where $h_{\nu}(z)$ is a Bessel, Struve or Anger--Weber function.  
\end{abstract}

\maketitle

\section{Introduction and preliminary results}

Formulas for the derivatives of general order for the functions $z^{-\nu}h_{\nu}(z)$ and $z^{\nu}h_{\nu}(z)$, where $z$ and $\nu$ are complex numbers and $h_{\nu}(z)$ is a Bessel, Struve or Anger--Weber function are established.  In particular, the functions, $h_{\nu}(z)$, that we obtain formulas for are the Bessel functions $J_{\nu}(z)$, $Y_{\nu}(z)$, $I_{\nu}(z)$ and $K_{\nu}(z)$; the Hankel functions $H^{(1)}_{\nu}(z)$ and $H^{(2)}_{\nu}(z)$; the Struve functions $\mathbf{H}_{\nu}(z)$ and $\mathbf{L}_{\nu}(z)$; the Anger--Weber functions $\mathbf{J}_{\nu}(z)$ and $\mathbf{E}_{\nu}(z)$.  For definitions and properties of these functions see, for example,  \cite{NIST, watson}.  

%These formulas for derivatives of any order are motivated by the occurrence of the expressions $\frac{\mathrm{d}^n}{\mathrm{d}z^n}(z^{-\nu}I_{\nu}(z))$ and $\frac{\mathrm{d}^n}{\mathrm{d}z^n}(z^{-\nu}K_{\nu}(z))$ in the study of Variance--Gamma approximations via Stein's method (Gaunt \cite{me}). 

The pair of simultaneous equations
\begin{align} \label{oneone} F_{\nu-1}(z)+F_{\nu+1}(z)&=2F'_{\nu}(z)+f_{\nu}(z), \\
\label{twotwo} F_{\nu-1}(z)-F_{\nu+1}(z)&=\frac{2\nu}{z}F_{\nu}(z)+g_{\nu}(z),
\end{align}
where $f_{\nu}(z)$ and $g_{\nu}(z)$ are arbitrary functions of the complex numbers $\nu$ and $z$, form a generalisation of the recurrence identities that are satisfied by the modified Bessel functions $I_{\nu}(z)$ and $K_{\nu}(z)$, and the modified Struve function $\mathbf{L}_{\nu}(z)$.  These identities can  be found in \cite{NIST, watson}.
Also, the pair of simultaneous equations
\begin{align} \label{threethree} G_{\nu-1}(z)-G_{\nu+1}(z)&=2G'_{\nu}(z)+f_{\nu}(z), \\
\label{fourfour} G_{\nu-1}(z)+G_{\nu+1}(z)&=\frac{2\nu}{z}G_{\nu}(z)+g_{\nu}(z),
\end{align}
where $f_{\nu}(z)$ and $g_{\nu}(z)$ are, again, arbitrary functions of $\nu$ and $z$, form a generalisation of the recurrence identities that are satisfied by the Bessel functions $J_{\nu}(z)$ and $Y_{\nu}(z)$, the Hankel functions $H^{(1)}_{\nu}(z)$ and $H^{(2)}_{\nu}(z)$, the Struve function $\mathbf{H}_{\nu}(z)$, and the Anger--Weber functions $\mathbf{J}_{\nu}(z)$ and $\mathbf{E}_{\nu}(z)$.  Again, these identities can be found in \cite{NIST, watson}.  

The simultaneous equations (\ref{threethree}) and (\ref{fourfour}) were studied by \cite{nielsen}, in which it was shown that the functions $f_{\nu}(z)$ and $g_{\nu}(z)$ must satisfy the relation
\[f_{\nu-1}(z)+f_{\nu+1}(z)-\frac{2\nu}{z}f_{\nu}(z)=g_{\nu-1}(z)-g_{\nu+1}(z)-\frac{2}{z}(zg_{\nu}(z))';\]
and it has been shown by  \cite{watson2} that, if this relation is satisfied, the system can be reduced to a pair of soluble difference equations of the first order.  We may apply similar arguments to the simultaneous equations (\ref{oneone}) and (\ref{twotwo}) to show that the functions $f_{\nu}(z)$ and $g_{\nu}(z)$ must satisfy the relation
\[f_{\nu-1}(z)-f_{\nu+1}(z)-\frac{2\nu}{z}f_{\nu}(z)=g_{\nu-1}(z)+g_{\nu+1}(z)-\frac{2}{z}(zg_{\nu}(z))';\]
and that, if this relation is satisfied, the system can be reduced to a pair of soluble difference equations of the first order.

From equations (\ref{oneone}) and (\ref{twotwo}), one can easily deduce the following formulas:
\begin{align}\label{red}\frac{\mathrm{d}}{\mathrm{d}z}\bigg(\frac{F_{\nu}(z)}{z^{\nu}}\bigg)&=\frac{F_{\nu+1}(z)}{z^{\nu}}+\frac{g_{\nu}(z)-f_{\nu}(z)}{2z^{\nu}}, \\
\label{blue}\frac{\mathrm{d}}{\mathrm{d}z}(z^{\nu}F_{\nu}(z))&=z^{\nu}F_{\nu-1}(z)-\frac{1}{2}z^{\nu}(f_{\nu}(z)+g_{\nu}(z)).
\end{align}
Similarly, from equations (\ref{threethree}) and (\ref{fourfour}), we have:
\begin{align}\label{yellow}\frac{\mathrm{d}}{\mathrm{d}z}\bigg(\frac{G_{\nu}(z)}{z^{\nu}}\bigg)&=-\frac{G_{\nu+1}(z)}{z^{\nu}}+\frac{g_{\nu}(z)-f_{\nu}(z)}{2z^{\nu}}, \\
\label{green}\frac{\mathrm{d}}{\mathrm{d}z}(z^{\nu}G_{\nu}(z))&=z^{\nu}G_{\nu-1}(z)-\frac{1}{2}z^{\nu}(f_{\nu}(z)+g_{\nu}(z)).
\end{align}

Again, formulas (\ref{red})--(\ref{green}) form a generalisation of the well--known formulas (see \cite{NIST, watson}) for the first-order derivatives of Bessel, Struve and Anger--Weber functions.  However, as far as this author is aware, there do not exist simple formulas in the literature for the $n$-th order derivatives of the functions $z^{\pm\nu}F_{\nu}(z)$ and $z^{\pm\nu}G_{\nu}(z)$.  This gap in the literature is filled in by this paper, and we apply these general formulas to obtain formulas for the $n$-th order derivatives of $z^{\pm\nu}h_{\nu}(z)$, where $h_{\nu}(z)$ is a Bessel, Struve or Anger--Weber function.  It should be noted that the proofs of our results rely heavily on the fact that the Bessel, Struve and Anger-Weber functions satisfy the simultaneous equations (\ref{oneone}), (\ref{twotwo}) and (\ref{threethree}), (\ref{fourfour}), and therefore analogous results for the generalized hypergeometric function and other general special functions are beyond the scope of this paper.  This investigation was motivated by occurrence of the $n$-th order derivatives of $z^{-\nu}I_{\nu}(z)$ and $z^{-\nu}K_{\nu}(z)$ in the study of Stein's method for variance-gamma approximation (see \cite{me, gaunt vg}), for which  the simple formulas obtained in this paper proved to be particularly useful. 

\section{Ancillary results} 

Before stating our main results, we establish a result for the coefficients that are present in the formulas.  The coefficients $A_{k}^{n}(\nu)$ and $B_{k}^{n}(\nu)$ are defined, for $n\in\mathbb{N}=\{0,1,2\ldots\}$, $k=0,1,\ldots,n$, and all complex numbers $\nu$, expect the integers $-(k+1),-(k+2),\ldots,-(2k-1),-(2k+1),-(2k+2),\ldots,-(k+n-1)$, and $-(k+2),-(k+3),\ldots,-2k,-(2k+2),-(2k+3),\ldots,-(k+n)$, respectively, as follows:
\begin{align} \label{mcd} A_{k}^{n}(\nu)&=\frac{(2n)!(\nu+2k)\prod_{j=0}^{k-1}(2\nu+2j+1)}{2^{2n-k}(2k)!(n-k)!\prod_{j=0}^n(\nu+k+j)},   \\
\label{gbk} B_{k}^{n}(\nu)&=\frac{(2n+1)!(\nu+2k+1)\prod_{j=0}^{k-1}(2\nu+2j+1)}{2^{2n-k}(2k+1)!(n-k)!\prod_{j=0}^n(\nu+k+j+1)}, 
\end{align}
where we set $\prod_{j=0}^{-1}(2\nu+2j+1)=1$.

\begin{remark}The coefficients $A_{k}^{n}(\nu)$ and $B_{k}^{n}(\nu)$ are equal to zero if and only if $k\geq 1$ and $\nu=-\frac{1}{2}-l$, where $l=0,1,\ldots,k-1$.
\end{remark}

\begin{remark} Let $(x)_n$ denote the Pochhammer symbol $(x)_n=x(x+1)\cdots(x+n-1)$.  Then straightforward calculations yield the following alternative expressions for the coefficients $A_{k}^{n}(\nu)$ and $B_{k}^{n}(\nu)$:
\begin{align*}A_k^n(\nu)&= \frac{(\tfrac{1}{2})_n}{(\nu+1)_n}\frac{(-1)^k(-n)_k(\nu)_k(\nu+\tfrac{1}{2})_k(\tfrac{\nu+2}{2})_k}{(\tfrac{1}{2})_k(\tfrac{\nu}{2})_k(\nu+n+1)_k k!},\\
B_k^n(\nu)&= \frac{(\tfrac{3}{2})_n}{(\nu+2)_n}\frac{(-1)^k(-n)_k(\nu+\tfrac{1}{2})_k(\nu+1)_k(\tfrac{\nu+3}{2})_k}{(\tfrac{3}{2})_k(\tfrac{\nu+1}{2})_k(\nu+n+2)_k k!}.
\end{align*}
We shall not make further use of these expressions, but we do note that the Pochhammer symbol is one of the main tools for complex analysis of generalized hypergeometric functions, such as the Bessel functions, meaning that these representations may be useful in future applications of the differentiation formulas that are derived in this paper.
\end{remark}

%The following lemma gives some properties of the coefficients $A_{k}^{n}(\nu)$ and $B_{k}^{n}(\nu)$ that will be used in the proof of the main result of this paper.
\begin{lemma} \label{jess} Let $n\in\mathbb{N}$. Then $A_{k}^{n}(\nu)$ and $B_{k}^{n}(\nu)$ are related as follows
\begin{align}\label{kfc}   B_{k}^{n}(\nu)&=\frac{\nu+k}{\nu+2k}A_{k}^{n}(\nu)+\frac{k+1}{\nu+2k+2}A_{k+1}^{n}(\nu), \quad  0\leq k\leq n-1, \\
\label{tan}   B_{n}^{n}(\nu)&=\frac{\nu+n}{\nu+2n}A_{n}^{n}(\nu), \\ 
\label{sin}   A_0^{n+1}(\nu)&=\frac{1}{2(\nu+1)}B_{0}^{n}(\nu), \\
\label{cos}  A_{k+1}^{n+1}(\nu)&=\frac{2\nu+2k+1}{2(\nu+2k+1)}B_{k}^{n}(\nu)+\frac{2k+3}{2(\nu+2k+3)}B_{k+1}^{n}(\nu),\quad  0\leq k\leq n-1, \\
\label{duff}   A_{n+1}^{n+1}(\nu)&=\frac{2\nu+2n+1}{2(\nu+2n+1)}B_{n}^{n}(\nu), 
\end{align}
and satisfy
\begin{equation}\label{bki} \sum_{k=0}^nA_{k}^{n}(\nu)=\sum_{k=0}^nB_{k}^{n}(\nu)=1. 
\end{equation}
\end{lemma}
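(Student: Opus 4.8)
The plan is to prove the five recurrence relations \eqref{kfc}--\eqref{duff} by direct substitution of the closed forms \eqref{mcd} and \eqref{gbk}, and then to deduce the summation identity \eqref{bki} as a consequence of these recurrences together with an induction on $n$. For the recurrences, each identity is an equality between rational functions of $\nu$ (with $n$, $k$ fixed), so it suffices to clear denominators and verify a polynomial identity; the factorials and the products $\prod_{j=0}^{k-1}(2\nu+2j+1)$ and $\prod_{j=0}^{n}(\nu+k+j)$ appearing on the two sides overlap heavily, so after cancelling common factors each relation should reduce to a short identity among low-degree polynomials in $\nu$. For instance, in \eqref{kfc} the factor $(2n)!/(2^{2n-k}(2k)!(n-k)!)$ and most of the product terms are common to $A_k^n$, $A_{k+1}^n$ and (after adjusting the power of $2$ and the ratio of $(2n+1)!/(2n)!$ versus $(2k+1)!/(2k)!$, i.e.\ a factor $(2n+1)/(2k+1)$) to $B_k^n$; what remains is a rational identity that one checks by bringing everything over the common denominator $(\nu+2k)(\nu+2k+2)(\nu+k)(\nu+k+n+1)$ or similar and comparing numerators. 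The relations \eqref{tan} and \eqref{duff} are the ``boundary'' cases $k=n$ of \eqref{kfc} and $k=n$ of \eqref{cos} respectively, obtained by noting that $A_{n+1}^n(\nu)$ and $B_{n+1}^n(\nu)$ are not defined / should be read as $0$ (the factor $(n-k)!$ in the denominator forces the $k=n+1$ term to vanish in the natural convention), so they need no separate argument beyond recording this convention.

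For \eqref{sin}, substitute $k=0$ into \eqref{gbk} to get $B_0^n(\nu)=\frac{(2n+1)!(\nu+1)}{2^{2n}n!\prod_{j=0}^n(\nu+j+1)}$, and substitute $n\mapsto n+1$, $k=0$ into \eqref{mcd} to get $A_0^{n+1}(\nu)=\frac{(2n+2)!\,\nu}{2^{2n+2}(n+1)!\prod_{j=0}^{n+1}(\nu+j)}$; the claimed relation $A_0^{n+1}(\nu)=\frac{1}{2(\nu+1)}B_0^n(\nu)$ then follows from the elementary simplifications $(2n+2)!=(2n+1)!(2n+2)$, $(n+1)!=n!(n+1)$, and $\prod_{j=0}^{n+1}(\nu+j)=\nu\cdot\prod_{j=0}^n(\nu+j+1)$, with the factor $(2n+2)/(4(n+1))=1/2$. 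The general relation \eqref{cos} is handled the same way, writing out $A_{k+1}^{n+1}(\nu)$, $B_k^n(\nu)$, $B_{k+1}^n(\nu)$ explicitly, extracting the common factor, and reducing to a polynomial identity in $\nu$ of small degree.

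Finally, for \eqref{bki}, I would argue by induction on $n$. The base case $n=0$ is immediate since $A_0^0(\nu)=B_0^0(\nu)=1$ directly from \eqref{mcd} and \eqref{gbk}. For the inductive step, assume $\sum_{k=0}^n A_k^n(\nu)=1$; then summing \eqref{sin}, \eqref{cos}, \eqref{duff} over the appropriate ranges telescopes the right-hand side:
\[
\sum_{k=0}^{n+1}A_k^{n+1}(\nu)=\frac{1}{2(\nu+1)}B_0^n(\nu)+\sum_{k=0}^{n-1}\Bigl(\frac{2\nu+2k+1}{2(\nu+2k+1)}B_k^n(\nu)+\frac{2k+3}{2(\nu+2k+3)}B_{k+1}^n(\nu)\Bigr)+\frac{2\nu+2n+1}{2(\nu+2n+1)}B_n^n(\nu),
\]
and reindexing the second piece of each summand shows that for each $k$ the coefficient of $B_k^n(\nu)$ collects to $\frac{2\nu+2k+1}{2(\nu+2k+1)}+\frac{2k+1}{2(\nu+2k+1)}=1$, so the whole sum equals $\sum_{k=0}^n B_k^n(\nu)$; similarly, summing \eqref{kfc} and \eqref{tan} shows $\sum_{k=0}^n B_k^n(\nu)=\sum_{k=0}^n A_k^n(\nu)$ after the analogous telescoping (the coefficient of $A_k^n(\nu)$ becomes $\frac{\nu+k}{\nu+2k}+\frac{k}{\nu+2k}=1$), which is $1$ by the inductive hypothesis. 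The main obstacle is purely bookkeeping: ensuring the boundary terms in the telescoping sums match the conventions $A_{n+1}^n=B_{n+1}^n=0$ and that no denominator among the $\nu+2k$, $\nu+2k+1$, etc., is being illegitimately cancelled — i.e.\ that the identities are first established as identities of rational functions valid off the excluded set of $\nu$, which is exactly the set carved out in the definitions of $A_k^n$ and $B_k^n$.
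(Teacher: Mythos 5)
Your proposal matches the paper's own argument: the five recurrences \eqref{kfc}--\eqref{duff} are verified by direct substitution of the closed forms \eqref{mcd} and \eqref{gbk} (the paper relegates this algebra to its appendix), and \eqref{bki} is then obtained exactly as you describe, by summing the recurrences, reindexing so that the coefficient of each $B_k^n(\nu)$ (respectively $A_k^n(\nu)$) collects to $1$, and inducting from $A_0^0(\nu)=B_0^0(\nu)=1$. The only cosmetic difference is that the paper checks \eqref{tan} and \eqref{duff} by short separate computations rather than via the convention $A_{n+1}^n(\nu)=B_{n+1}^n(\nu)=0$, which does not change the substance.
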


\begin{proof}Identities (\ref{kfc})--(\ref{duff}) can be verified by simply substituting the definitions of $A_{k}^{n}(\nu)$ and $B_{k+1}^{n+1}(\nu)$, as given by (\ref{mcd}) and (\ref{gbk}), into both sides of the identities.  

We now prove identity (\ref{bki}).  From (\ref{sin}), (\ref{cos}) and (\ref{duff}), we have that  
\begin{align*}\sum_{k=0}^{n+1}A_{k}^{n+1}(\nu)&=\frac{1}{2(\nu+1)}B_0^{n}(\nu)+\sum_{k=0}^{n-1}\bigg\{\frac{2\nu+2k+1}{2(\nu+2k+1)}B_{k}^{n}(\nu) \\
&\quad+\frac{2k+3}{2(\nu+2k+3)}B_{k+1}^{n}(\nu)\bigg\}+\frac{2\nu+2n+1}{2(\nu+2n+1)}B_{n}^{n}(\nu) \\
&=\sum_{k=0}^n\bigg\{\frac{2\nu+2k+1}{2(\nu+2k+1)}+\frac{2k+1}{2(\nu+2k+1)}\bigg\}B_{k}^{n}(\nu)=\sum_{k=0}^nB_{k}^{n}(\nu).
\end{align*}
A similar calculation shows that
$\sum_{k=0}^nB_{k}^{n}(\nu)=\sum_{k=0}^nA_{k}^{n}(\nu).$
Since $A_0^0(\nu)=B_0^0(\nu)=1$, the result follows.
\end{proof}

\section{Main results}

We are now able to prove our main results.  To simplify the formulas, we define the functions $p_{\nu,l}(z)$ and $q_{\nu,l}(z)$, for $l\in\mathbb{N}$ and $\nu\in\mathbb{C}$, by
\begin{align*}p_{\nu,l}(z)&=\frac{\nu}{2(\nu+l)}\frac{g_{\nu+l}(z)}{z^{\nu}}-\frac{f_{\nu+l}(z)}{2z^{\nu}},  \quad q_{\nu,l}(z)=-\frac{\nu}{2(\nu-l)}z^{\nu}g_{\nu-l}(z)-\frac{1}{2}z^{\nu}f_{\nu-l}(z).
\end{align*}
We use the convention $\frac{0}{0}:=1$, so that $p_{0,0}(z)=\frac{1}{2}(g_0(z)-f_0(z))$ and $q_{0,0}(z)=-\frac{1}{2}(f_0(z)+g_0(z))$.  Also, for $N\geq 1$, we write $[N]$ for the set $\{1,2,\ldots,N\}$, and write $-[N]$ for the set $\{-1,-2,\ldots,-N\}$.  We take $[0]$, $[-1]$, $-[0]$ and $-[-1]$ to be the empty set.  Finally, we let $h^{(n)}(z)$ denote the $n$-th derivative of $h(z)$.  
\begin{theorem} \label{main} Suppose that $F_{\nu}(z)$ satisfies the simultaneous equations (\ref{oneone}) and (\ref{twotwo}), and that $G_{\nu}(z)$ satisfies the simultaneous equations (\ref{threethree}) and (\ref{fourfour}).  Also, suppose that $p_{\nu,l}(z),$ $q_{\nu,l}(z)\in C^{2n}(\mathbb{C})$, for all $l\in\{0,1,\ldots,2n\}$.  Then for $n\in\mathbb{N}$,  
\begin{align} \label{wreck} \frac{\mathrm{d}^{2n}}{\mathrm{d}z^{2n}}\left(\frac{F_{\nu}(z)}{z^{\nu}}\right) &= \sum_{k=0}^n A_{k}^{n}(\nu) \frac{F_{\nu +2k}(z)}{z^{\nu}} +\sum_{j=0}^{n-1}\sum_{k=0}^jA_k^j(\nu)p_{\nu,2k}^{(2n-2j-1)}(z) \nonumber \\
&\quad+\sum_{j=0}^{n-1}\sum_{k=0}^jB_k^j(\nu)p_{\nu,2k+1}^{(2n-2j-2)}(z),  \qquad \nu\in\mathbb{C} \setminus(-[2n-1]), \\
 \label{mend} \frac{\mathrm{d}^{2n+1}}{\mathrm{d}z^{2n+1}}\left(\frac{F_{\nu}(z)}{z^{\nu}}\right) &=  \sum_{k=0}^n B_{k}^{n}(\nu) \frac{F_{\nu +2k+1}(z)}{z^{\nu}} +\sum_{j=0}^{n}\sum_{k=0}^jA_k^j(\nu)p_{\nu,2k}^{(2n-2j)}(z) \nonumber \\
&\quad+\sum_{j=0}^{n-1}\sum_{k=0}^jB_k^j(\nu)p_{\nu,2k+1}^{(2n-2j-1)}(z), \qquad \nu\in\mathbb{C} \setminus(-[2n]), \\
 \label{wreck1} \frac{\mathrm{d}^{2n}}{\mathrm{d}z^{2n}}(z^{\nu}F_{\nu}(z)) &= \sum_{k=0}^n A_{k}^{n}(-\nu) z^{\nu}F_{\nu -2k}(z)+\sum_{j=0}^{n-1}\sum_{k=0}^jA_k^j(-\nu)q_{\nu,2k}^{(2n-2j-1)}(z) \nonumber \\
&\quad+\sum_{j=0}^{n-1}\sum_{k=0}^jB_k^j(-\nu)q_{\nu,2k+1}^{(2n-2j-2)}(z), \qquad \nu\in\mathbb{C} \setminus[2n-1], 
\end{align}
\begin{align}\label{mend1} \frac{\mathrm{d}^{2n+1}}{\mathrm{d}z^{2n+1}}(z^{\nu}F_{\nu}(z)) &=  \sum_{k=0}^n B_{k}^{n}(-\nu) z^{\nu}F_{\nu -2k-1}(z)+\sum_{j=0}^{n}\sum_{k=0}^jA_k^j(-\nu)q_{\nu,2k}^{(2n-2j)}(z) \nonumber \\
&\quad+\sum_{j=0}^{n-1}\sum_{k=0}^jB_k^j(-\nu)q_{\nu,2k+1}^{(2n-2j-1)}(z), \qquad \nu\in\mathbb{C} \setminus[2n],     \\
\label{wreck2} \frac{\mathrm{d}^{2n}}{\mathrm{d}z^{2n}}\left(\frac{G_{\nu}(z)}{z^{\nu}}\right) &= \sum_{k=0}^n (-1)^{n+k} A_{k}^{n}(\nu) \frac{G_{\nu +2k}(z)}{z^{\nu}} \nonumber \\
 &\quad+\sum_{j=0}^{n-1}\sum_{k=0}^j(-1)^{k+j}A_k^j(\nu)p_{\nu,2k}^{(2n-2j-1)}(z) \nonumber \\
&\quad+\sum_{j=0}^{n-1}\sum_{k=0}^j(-1)^{k+j+1}B_k^j(\nu)p_{\nu,2k+1}^{(2n-2j-2)}(z), \: \nu\in\mathbb{C} \setminus(-[2n-1]), \\
\label{mend2} \frac{\mathrm{d}^{2n+1}}{\mathrm{d}z^{2n+1}}\left(\frac{G_{\nu}(z)}{z^{\nu}}\right) &=  \sum_{k=0}^n (-1)^{n+k+1}B_{k}^{n}(\nu) \frac{G_{\nu +2k+1}(z)}{z^{\nu}} \nonumber \\
 &\quad+\sum_{j=0}^{n}\sum_{k=0}^j(-1)^{k+j}A_k^j(\nu)p_{\nu,2k}^{(2n-2j)}(z) \nonumber \\
&\quad+\sum_{j=0}^{n-1}\sum_{k=0}^j(-1)^{k+j+1}B_k^j(\nu)p_{\nu,2k+1}^{(2n-2j-1)}(z), \quad \nu\in\mathbb{C} \setminus(-[2n]), \\
\label{wreck3} \frac{\mathrm{d}^{2n}}{\mathrm{d}z^{2n}}(z^{\nu}G_{\nu}(z)) &= \sum_{k=0}^n (-1)^{n+k}A_{k}^{n}(-\nu) z^{\nu}G_{\nu -2k}(z) \nonumber \\
 &\quad+\sum_{j=0}^{n-1}\sum_{k=0}^j(-1)^{k+j}A_k^j(-\nu)q_{\nu,2k}^{(2n-2j-1)}(z) \nonumber \\
&\quad+\sum_{j=0}^{n-1}\sum_{k=0}^j(-1)^{k+j}B_k^j(-\nu)q_{\nu,2k+1}^{(2n-2j-2)}(z), \quad \nu\in\mathbb{C} \setminus[2n-1], \\
\label{mend3} \frac{\mathrm{d}^{2n+1}}{\mathrm{d}z^{2n+1}}(z^{\nu}G_{\nu}(z)) &=  \sum_{k=0}^n (-1)^{n+k}B_{k}^{n}(-\nu) z^{\nu}G_{\nu -2k-1}(z) \nonumber \\
 &\quad +\sum_{j=0}^{n}\sum_{k=0}^j(-1)^{k+j}A_k^j(-\nu)q_{\nu,2k}^{(2n-2j)}(z) \nonumber \\
&\quad+\sum_{j=0}^{n-1}\sum_{k=0}^j(-1)^{k+j}B_k^j(-\nu)q_{\nu,2k+1}^{(2n-2j-1)}(z), \quad \nu\in\mathbb{C} \setminus[2n],
\end{align}
where we use the convention that $\sum_{k=0}^{-1}a_k=0$.
\end{theorem}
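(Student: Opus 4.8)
The plan is to establish all eight identities at once by induction on the order $N$ of the derivative, the even‑order formulas (\ref{wreck}), (\ref{wreck1}), (\ref{wreck2}), (\ref{wreck3}) being the shape the identity takes when $N=2n$ and the odd‑order ones (\ref{mend}), (\ref{mend1}), (\ref{mend2}), (\ref{mend3}) the shape when $N=2n+1$. The base case $N=0$ is immediate: the leading sum collapses (using $A_0^0=B_0^0=1$ from (\ref{bki})) to the single term and the double sums are empty; the case $N=1$ is exactly (\ref{red})--(\ref{green}). Given the formula for $\frac{d^{N}}{dz^{N}}$, I would differentiate once more and match the result with the claimed formula for $\frac{d^{N+1}}{dz^{N+1}}$.

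The computational engine is a family of first‑order ``shift'' identities extracted from (\ref{oneone})--(\ref{fourfour}): for every $m\ge 0$ and $\nu$ in the relevant range,
\begin{align*}
\frac{d}{dz}\Big(\frac{F_{\nu+m}(z)}{z^{\nu}}\Big)&=\frac{m}{2(\nu+m)}\frac{F_{\nu+m-1}(z)}{z^{\nu}}+\frac{2\nu+m}{2(\nu+m)}\frac{F_{\nu+m+1}(z)}{z^{\nu}}+p_{\nu,m}(z),\\
\frac{d}{dz}\big(z^{\nu}F_{\nu-m}(z)\big)&=\frac{2\nu-m}{2(\nu-m)}z^{\nu}F_{\nu-m-1}(z)-\frac{m}{2(\nu-m)}z^{\nu}F_{\nu-m+1}(z)+q_{\nu,m}(z),
\end{align*}
the $m=0$ cases being (\ref{red}) and (\ref{blue}) themselves, together with the analogues for $G$ in which the coefficient of the higher‑index shift has its sign reversed — this is the only point at which (\ref{yellow})--(\ref{green}) differ from (\ref{red})--(\ref{blue}), and it is the source of the factors $(-1)^{n+k}$. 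Each identity is obtained by writing $F_{\nu+m}/z^{\nu}=z^{m}\cdot F_{\nu+m}/z^{\nu+m}$, applying the shifted form of (\ref{red}), and then using the shifted form of (\ref{twotwo}) to eliminate the stray term $\tfrac{1}{z}F_{\nu+m}$. The one structurally essential observation is that the down‑shift coefficient $\frac{m}{2(\nu+m)}$ \emph{vanishes at $m=0$}: this is precisely why differentiating $2n$ times never produces the negative‑index functions $F_{\nu-2}/z^\nu,\dots,F_{\nu-2n}/z^\nu$, so that the leading sum stays over $k=0,\dots,n$ as claimed.

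For the inductive step I would differentiate the leading sum of the level‑$N$ formula term by term with the relevant shift identity and reindex. The coefficient of a surviving function — say $F_{\nu+2j+1}(z)/z^\nu$ in passing from (\ref{wreck}) to (\ref{mend}) — then equals $\frac{\nu+j}{\nu+2j}A_j^n(\nu)+\frac{j+1}{\nu+2j+2}A_{j+1}^n(\nu)$, which is $B_j^n(\nu)$ by (\ref{kfc}) for $0\le j\le n-1$ and by (\ref{tan}) for $j=n$; the other seven transitions go the same way, using (\ref{kfc}), (\ref{tan}), (\ref{sin}), (\ref{cos}), (\ref{duff}) of Lemma \ref{jess} invoked at $\nu$ for the $F_\nu/z^\nu$, $G_\nu/z^\nu$ families and at $-\nu$ for the $z^\nu F_\nu$, $z^\nu G_\nu$ families, the alternating signs coming automatically from the reversed shift coefficients in the $G$‑identities. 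Simultaneously, differentiating the double‑sum inhomogeneous part raises every derivative order by one — matching the index shift $2n-2j-1\mapsto 2n-2j$, and so on — while the terms $p_{\nu,2k}(z),p_{\nu,2k+1}(z)$ (or $q_{\nu,2k}(z),q_{\nu,2k+1}(z)$) shed by the shift identities, carrying the old leading coefficients, supply exactly the new $j=n$ (resp. $j=n+1$) slice of the double sum; here one uses that the leading coefficient of $F_{\nu+2k}/z^\nu$ at level $2j$ is the very number $A_k^j(\nu)$ that subsequently multiplies $p_{\nu,2k}$. The hypothesis $p_{\nu,l},q_{\nu,l}\in C^{2n}(\mathbb{C})$ for $l\le 2n$ is what makes the iterated differentiations legitimate and every derivative $p_{\nu,l}^{(r)},q_{\nu,l}^{(r)}$ with $r\le 2n$ occurring in the statement meaningful.

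I expect the main obstacle to be not any single step but the bookkeeping: tracking the eight formulas, the two parities, the swap $\nu\leftrightarrow-\nu$, the $F\leftrightarrow G$ sign reversal, and the boundary conventions ($\sum_{k=0}^{-1}=0$, and the change from $\sum_{j=0}^{n-1}$ to $\sum_{j=0}^{n}$ between even and odd orders) so that the inner‑sum ranges come out exactly as stated. The only genuinely substantive point — the disappearance of the negative‑index terms — is immediate once the shift identities are in hand, and everything else is algebra driven by Lemma \ref{jess}. As a shortcut one may deduce (\ref{wreck1}), (\ref{mend1}) from (\ref{wreck}), (\ref{mend}) by the substitution $F_\nu\mapsto F_{-\nu}$, $\nu\mapsto-\nu$, $f_\nu\mapsto f_{-\nu}$, $g_\nu\mapsto -g_{-\nu}$, under which $p_{-\nu,l}$ turns into $q_{\nu,l}$; but since (\ref{threethree}) is not invariant under $\nu\mapsto-\nu$, this trick fails for $G$, so (\ref{wreck3}), (\ref{mend3}) must be obtained from their own induction.
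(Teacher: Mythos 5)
Your proposal is correct and follows essentially the same route as the paper: the identical alternating even/odd induction on the order of the derivative, powered by the same shift identities (the paper's (\ref{grand}), (\ref{rayol2}), (\ref{grand55}) and (\ref{grand99})) and by identifying the resulting coefficients with $A_k^n(\pm\nu)$, $B_k^n(\pm\nu)$ through Lemma \ref{jess}, with your $\nu\mapsto-\nu$ substitution for the $z^{\nu}F_{\nu}$ formulas being a precise version of the paper's comparison of (\ref{rayol})--(\ref{rayol2}) with (\ref{stat})--(\ref{grand}). The only quibble is cosmetic: in the odd-to-even step the shed terms $B_k^n(\nu)p_{\nu,2k+1}(z)$ form the $j=n$ slice of the new double sum (whose upper limit is $(n+1)-1=n$), not a $j=n+1$ slice.
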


\begin{proof}
We begin by proving formulas (\ref{wreck}) and (\ref{mend}) and do so by induction on $n$.  It is certainly true that (\ref{wreck}) holds for $n=0$ and (\ref{mend}) holds for $n=0$ by (\ref{red}).  Suppose now that (\ref{mend}) holds for $n=m$, where $m\geq 0$.  We therefore have
\begin{align}
\label{bear}\frac{\mathrm{d}^{2m+1}}{\mathrm{d}z^{2m+1}}\left(\frac{F_{\nu}(z)}{z^{\nu}}\right) &=  \sum_{k=0}^m B_{k}^{m}(\nu) \frac{F_{\nu +2k+1}(z)}{z^{\nu}} +\sum_{j=0}^{m}\sum_{k=0}^jA_k^j(\nu)p_{\nu,2k}^{(2m-2j)}(z) \nonumber \\
&\quad+\sum_{j=0}^{m-1}\sum_{k=0}^jB_k^j(\nu)p_{\nu,2k+1}^{(2m-2j-1)}(z), \qquad \nu\in\mathbb{C} \setminus(-[2m]).
\end{align}
Our inductive argument will involve differentiating both sides of (\ref{bear}) and to do so will shall need a formula for the first derivative of the function $z^{-\nu}F_{\nu+\alpha}(z)$, where $\alpha\in\mathbb{N}$.  Applying (\ref{red}) and (\ref{twotwo}) we have, for all $\nu\not= -\alpha$,  
\begin{align}\label{grand}\frac{\mathrm{d}}{\mathrm{d}z}\bigg(\frac{F_{\nu+\alpha}(z)}{z^{\nu}}\bigg)&=\frac{\mathrm{d}}{\mathrm{d}z}\bigg(z^{\alpha}\cdot\frac{F_{\nu+\alpha}(z)}{z^{\nu+\alpha}}\bigg) \nonumber \\
&=\frac{F_{\nu+\alpha+1}(z)}{z^{\nu}}+\frac{\alpha F_{\nu+\alpha}(z)}{z^{\nu+1}}+\frac{g_{\nu+\alpha}(z)-f_{\nu+\alpha}(z)}{2z^{\nu}} \nonumber \\
&=\frac{F_{\nu+\alpha+1}(z)}{z^{\nu}}+\frac{\alpha}{z^{\nu+1}}\cdot\frac{z}{2(\nu+\alpha)}(F_{\nu+\alpha-1}(z)-F_{\nu+\alpha+1}(z) \nonumber \\
&\quad-g_{\nu+\alpha}(z)) +\frac{g_{\nu+\alpha}(z)-f_{\nu+\alpha}(z)}{2z^{\nu}} \nonumber \\
&=\frac{2\nu+\alpha}{2(\nu+\alpha)}\frac{F_{\nu+\alpha+1}(z)}{z^{\nu}}+\frac{\alpha}{2(\nu+\alpha)}\frac{F_{\nu+\alpha-1}(z)}{z^{\nu}} \nonumber \\
&\quad+\frac{\nu}{2(\nu+\alpha)}\frac{g_{\nu+\alpha}(z)}{z^{\nu}}-\frac{f_{\nu+\alpha}(z)}{2z^{\nu}} \nonumber \\
&=\frac{2\nu+\alpha}{2(\nu+\alpha)}\frac{F_{\nu+\alpha+1}(z)}{z^{\nu}}+\frac{\alpha}{2(\nu+\alpha)}\frac{F_{\nu+\alpha-1}(z)}{z^{\nu}}+p_{\nu,\alpha}(z).
\end{align} 
With this formula we may differentiate both sides of (\ref{bear}) to obtain
\begin{align*}
\frac{\mathrm{d}^{2m+2}}{\mathrm{d}z^{2m+2}}\left(\frac{F_{\nu}(z)}{z^{\nu}}\right) &=  \sum_{k=0}^m B_{k}^{m}(\nu) \bigg(\frac{2\nu+2k+1}{2(\nu+2k+1)}\frac{F_{\nu+2k+2}(z)}{z^{\nu}} \\
&\quad+\frac{2k+1}{2(\nu+2k+1)}\frac{F_{\nu+2k}(z)}{z^{\nu}} +p_{\nu,2k+1}(z)\bigg) \\ 
&\quad+\sum_{j=0}^{m}\sum_{k=0}^jA_k^j(\nu)p_{\nu,2k}^{(2m-2j+1)}(z) +\sum_{j=0}^{m-1}\sum_{k=0}^jB_k^j(\nu)p_{\nu,2k+1}^{(2m-2j)}(z) \\
&=  \sum_{k=0}^{m+1} \tilde{A}_{k}^{m}(\nu)\frac{F_{\nu+2k}(z)}{z^{\nu}}  +\sum_{j=0}^{m}\sum_{k=0}^jA_k^j(\nu)p_{\nu,2k}^{(2m-2j+1)}(z) \\ 
&\quad+\sum_{j=0}^{m}\sum_{k=0}^jB_k^j(\nu)p_{\nu,2k+1}^{(2m-2j)}(z), \qquad \nu\in\mathbb{C}\setminus(-[2m+1]),
\end{align*}
where
\begin{align*}\tilde{A}_0^{m+1}(\nu)&=\frac{1}{2(\nu+1)}B_{0}^{m}(\nu), \quad \tilde{A}_{m+1}^{m+1}(\nu)=\frac{2\nu+2m+1}{2(\nu+2m+1)}B_{m}^{m}(\nu) \\
\tilde{A}_{k+1}^{m+1}(\nu)&=\frac{2\nu+2k+1}{2(\nu+2k+1)}B_{k}^{m}(\nu)+\frac{2k+3}{2(\nu+2k+3)}B_{k+1}^{m}(\nu), \quad k=0,1,\ldots ,m-1.
\end{align*}
We see from Lemma \ref{jess} that $\tilde{A}_{k}^{m+1}(\nu)=A_{k}^{m+1}(\nu)$, for all $k=0,1,\ldots ,m+1$.  It therefore follows that if (\ref{mend}) holds for $n=m$ then (\ref{wreck}) holds for $n=m+1$.

We now suppose that (\ref{wreck}) holds for $n=m$, where $m\geq 1$.  If we can show that it then follows that (\ref{mend}) holds for $n=m$ then the proof will be complete.  Our inductive hypothesis is therefore that
\begin{align*} \frac{\mathrm{d}^{2m}}{\mathrm{d}z^{2m}}\left(\frac{F_{\nu}(z)}{z^{\nu}}\right) &= \sum_{k=0}^m A_{k}^{m}(\nu) \frac{F_{\nu +2k}(z)}{z^{\nu}} +\sum_{j=0}^{m-1}\sum_{k=0}^jA_k^j(\nu)p_{\nu,2k}^{(2m-2j-1)}(z)  \\
&\quad+\sum_{j=0}^{m-1}\sum_{k=0}^jB_k^j(\nu)p_{\nu,2k+1}^{(2m-2j-2)}(z),  \qquad \nu\in\mathbb{C} \setminus(-[2m-1]).
\end{align*}
We may use the formula (\ref{grand}) to differentiate the functions $z^{-\nu}F_{\nu+2k}(z)$, for $k\geq 0$, and thus use a similar argument to the first part to obtain
\begin{align*}\frac{\mathrm{d}^{2m+1}}{\mathrm{d}z^{2m+1}}\left(\frac{F_{\nu}(z)}{z^{\nu}}\right) &=  \sum_{k=0}^m \tilde{B}_{k}^{m}(\nu) \frac{F_{\nu +2k+1}(z)}{z^{\nu}} +\sum_{j=0}^{m}\sum_{k=0}^jA_k^j(\nu)p_{\nu,2k}^{(2m-2j)}(z)  \\
&\quad+\sum_{j=0}^{m-1}\sum_{k=0}^jB_k^j(\nu)p_{\nu,2k+1}^{(2m-2j-1)}(z), \qquad \nu\in\mathbb{C} \setminus(-[2m]),
\end{align*}
where
\[\tilde{B}_{m}^{m}(\nu)=\frac{\nu+m}{\nu+2m}A_{m}^{m}(\nu), \quad \tilde{B}_{k}^{m}(\nu)=\frac{\nu+k}{\nu+2k}A_{k}^{m}(\nu)+\frac{k+1}{\nu+2k+2}A_{k+1}^{m}(\nu),\]
and $k=0,1,\ldots,m-1.$
We see from lemma \ref{jess} that $\tilde{B}_{k}^{m}(\nu)=B_{k}^{m}(\nu)$, for all $k=0,1,\ldots ,m$.  It therefore follows that if (\ref{wreck}) holds for $n=m$ then (\ref{mend}) holds for $n=m$, which completes the proof of formulas (\ref{wreck}) and (\ref{mend}). 

We now prove formulas (\ref{wreck1}) and (\ref{mend1}).  Using (\ref{blue}) and (\ref{twotwo}) and a similar calculation to the one used to obtain (\ref{grand}), we have, for all $\nu\not=\alpha$,
\begin{equation}\label{rayol2}\frac{\mathrm{d}}{\mathrm{d}z}(z^{\nu}F_{\nu-\alpha}(z))=\frac{-2\nu+\alpha}{2(-\nu+\alpha)}z^{\nu}F_{\nu-\alpha-1}(z)+\frac{\alpha}{2(-\nu+\alpha)}z^{\nu}F_{\nu-\alpha+1}(z)+q_{\nu,\alpha}(z).
\end{equation}
Comparing  (\ref{rayol2}) with (\ref{grand}), we can see that the formula for $\frac{\mathrm{d}^{2n}}{\mathrm{d}z^{2n}}(z^{\nu}F_{\nu})$ will be similar to formula (\ref{wreck}), with the only difference being that we replace the terms $z^{-\nu}F_{\nu+\alpha}(z)$ by $z^{\nu}F_{\nu-\alpha}(z)$, the terms $A_k^l(\nu)$ and $B_k^l(\nu)$ by $A_k^l(-\nu)$ and $B_k^l(-\nu)$, and the terms $p_{\nu,\alpha}^{(l)}(z)$ by $q_{\nu,\alpha}^{(l)}(z)$.  Formulas (\ref{wreck2}), (\ref{mend2}), (\ref{wreck3}) and (\ref{mend3}) can be verified using similar calculations.
\end{proof}

We now apply Theorem \ref{main} to obtain formulas for the derivatives of any order for the functions $z^{-\nu}h_{\nu}(z)$ and $z^{\nu}h_{\nu}(z)$, where $h_{\nu}(z)$ is a Bessel, Struve, or Anger--Weber function.  The formulas for Bessel functions are particularly simple: 

\begin{corollary} \label{fraction}
Let $\mathscr{C}_{\nu}(z)$ denote $J_{\nu}(z)$, $Y_{\nu}(z)$, $H_{\nu}^{(1)}(z)$, $H_{\nu}^{(2)}(z)$ or any linear combination of these functions, in which the coefficients are independent of $\nu$ and $z$.  Then for $n\in\mathbb{N}$,
\begin{align*}  \frac{\mathrm{d}^{2n}}{\mathrm{d}z^{2n}}\left(\frac{\mathscr{C}_{\nu}(z)}{z^{\nu}}\right) &= \sum_{k=0}^n (-1)^{n+k} A_{k}^{n}(\nu) \frac{\mathscr{C}_{\nu +2k}(z)}{z^{\nu}}, \qquad \nu\in\mathbb{C} \setminus(-[2n-1]), \\
  \frac{\mathrm{d}^{2n+1}}{\mathrm{d}z^{2n+1}}\left(\frac{\mathscr{C}_{\nu}(z)}{z^{\nu}}\right) &=  \sum_{k=0}^n (-1)^{n+k+1}B_{k}^{n}(\nu) \frac{\mathscr{C}_{\nu +2k+1}(z)}{z^{\nu}}, \qquad \nu\in\mathbb{C} \setminus(-[2n]), \\
  \frac{\mathrm{d}^{2n}}{\mathrm{d}z^{2n}}(z^{\nu}\mathscr{C}_{\nu}(z)) &= \sum_{k=0}^n (-1)^{n+k}A_{k}^{n}(-\nu) z^{\nu}\mathscr{C}_{\nu -2k}(z), \qquad \nu\in\mathbb{C} \setminus[2n-1], \\
 \frac{\mathrm{d}^{2n+1}}{\mathrm{d}z^{2n+1}}(z^{\nu}\mathscr{C}_{\nu}(z)) &=  \sum_{k=0}^n (-1)^{n+k}B_{k}^{n}(-\nu) z^{\nu}\mathscr{C}_{\nu -2k-1}(z), \qquad \nu\in\mathbb{C} \setminus[2n]. 
\end{align*}
Now let $\mathscr{L}_{\nu}(z)$ denote $I_{\nu}(z)$, $ e^{\nu \pi i}K_{\nu}(z)$ or any linear combination of these functions, in which the coefficients are independent of $\nu$ and $z$.  Then for $n\in\mathbb{N}$,
\begin{align}  \frac{\mathrm{d}^{2n}}{\mathrm{d}z^{2n}}\left(\frac{\mathscr{L}_{\nu}(z)}{z^{\nu}}\right) &= \sum_{k=0}^n A_{k}^{n}(\nu) \frac{\mathscr{L}_{\nu +2k}(z)}{z^{\nu}}, \qquad \nu\in\mathbb{C} \setminus(-[2n-1]), \nonumber \\
\frac{\mathrm{d}^{2n+1}}{\mathrm{d}z^{2n+1}}\left(\frac{\mathscr{L}_{\nu}(z)}{z^{\nu}}\right) &=  \sum_{k=0}^n B_{k}^{n}(\nu) \frac{\mathscr{L}_{\nu +2k+1}(z)}{z^{\nu}}, \qquad \nu\in\mathbb{C} \setminus(-[2n]), \nonumber \\
 \frac{\mathrm{d}^{2n}}{\mathrm{d}z^{2n}}(z^{\nu}\mathscr{L}_{\nu}(z)) &= \sum_{k=0}^n A_{k}^{n}(-\nu) z^{\nu}\mathscr{L}_{\nu -2k}(z), \qquad \nu\in\mathbb{C} \setminus[2n-1], \nonumber \\
  \frac{\mathrm{d}^{2n+1}}{\mathrm{d}z^{2n+1}}(z^{\nu}\mathscr{L}_{\nu}(z)) &=  \sum_{k=0}^n B_{k}^{n}(-\nu) z^{\nu}\mathscr{L}_{\nu -2k-1}(z), \qquad \nu\in\mathbb{C} \setminus[2n]. \nonumber 
\end{align}
\end{corollary}

\begin{proof}We have that $\mathscr{C}_{\nu}(z)$ satisfies the system of equations (\ref{threethree}) and (\ref{fourfour}), with $f_{\nu}(z)=g_{\nu}(z)=0$ (see \cite{NIST}, Section 10.6).  Hence, the formulas involving $L_{\nu}(z)$ follow immediately from taking $p_{\nu,l}(z)=q_{\nu,l}(z)=0$ in formulas (\ref{wreck2})--(\ref{mend3}) of Theorem \ref{main}.  The formulas involving $\mathscr{L}_{\nu}(z)$ are derived similarly.
\end{proof}

\begin{corollary}The $n$-th derivatives of the functions $z^{-\nu}\mathbf{H}_{\nu}(z)$ and $z^{\nu}\mathbf{H}_{\nu}(z)$ satisfy formulas similar to (\ref{wreck2})--(\ref{mend3}), with the only difference being that the terms $p_{\nu,l}^{(m)}(z)$ and $q_{\nu,l}^{(m)}(z)$ are replaced by $t_{\nu,l}^m(z)$ and $u_{\nu,l}^m(z)$, respectively, functions $t_{\nu,l}^{m}(z)$ and $u_{\nu,l}^{m}(z)$ are defined, for $l,m\in\mathbb{N}$, by
\begin{align*}t_{\nu,l}^{m}(z)&=\frac{(2\nu+l)(\frac{1}{2})^{\nu+l+1}}{\sqrt{\pi}(\nu+l)\Gamma(\nu+l+\frac{3}{2})}(l)_{m}z^{l-m}, \\
u_{\nu,l}^{m}(z)&= \frac{l(\frac{1}{2})^{\nu-l+1}}{\sqrt{\pi}(l-\nu)\Gamma(\nu-l+\frac{3}{2})}(2\nu-l)_{m}z^{2\nu-l-m}, 
\end{align*}
where $(x)_n$ denotes the Pochhammer symbol $(x)_n=x(x+1)\cdots(x+n-1)$. 

Similarly, the $n$-th derivatives of the functions $z^{-\nu}\mathbf{L}_{\nu}(z)$ and $z^{\nu}\mathbf{L}_{\nu}(z)$ satisfy formulas similar to (\ref{wreck})--(\ref{mend1}), with the only difference being that the terms $p_{\nu,l}^{(m)}(z)$ and $q_{\nu,l}^{(m)}(z)$ are replaced by $t_{\nu,l}^m(z)$ and $u_{\nu,l}^m(z)$.

The $n$-th derivatives of the functions $z^{-\nu}\mathbf{J}_{\nu}(z)$ and $z^{\nu}\mathbf{J}_{\nu}(z)$ satisfy formulas similar to (\ref{wreck2})--(\ref{mend3}), with the only difference being that the terms $p_{\nu,l}^{(m)}(z)$ and $q_{\nu,l}^{(m)}(z)$ are replaced by $t_{\nu,l}^m(z)$ and $u_{\nu,l}^m(z)$, respectively, where the functions $v_{\nu,l}^{m}(z)$ and $w_{\nu,l}^{m}(z)$ are defined, for $l,m\in\mathbb{N}$, by
\begin{align*}v_{\nu,l}^{m}(z)&=-\frac{\nu}{\pi(\nu+l)}\sin(\pi(\nu+l))(-\nu-1)_{m}z^{-\nu-m-1},  \\
w_{\nu,l}^{m}(z)&=\frac{\nu}{\pi(\nu-l)}\sin(\pi(\nu-l))(\nu-1)_{m}z^{\nu-m-1}.
\end{align*} 
The formulas for the Weber function $\mathbf{E}_{\nu}(z)$ are similar, with the only difference being that $\sin(\pi(\nu+k))$ is replaced by $1-\cos(\pi(\nu+k))$, for $k\in\mathbb{Z}$.
\end{corollary}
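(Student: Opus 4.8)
The plan is to apply Theorem~\ref{main} directly. For each of $\mathbf{H}_\nu(z)$, $\mathbf{L}_\nu(z)$, $\mathbf{J}_\nu(z)$ and $\mathbf{E}_\nu(z)$ I would (i)~recall the three--term recurrences satisfied by the function (listed in the Appendix; see also \cite{NIST} and \cite{watson}) and compare them with the systems (\ref{oneone})--(\ref{twotwo}) or (\ref{threethree})--(\ref{fourfour}) to read off the inhomogeneous terms $f_\nu(z)$ and $g_\nu(z)$; (ii)~substitute these into the definitions of $p_{\nu,l}(z)$ and $q_{\nu,l}(z)$ given just before Theorem~\ref{main} and simplify; and (iii)~invoke the relevant formula among (\ref{wreck})--(\ref{mend3}). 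The crux is that, in every case, $p_{\nu,l}(z)$ and $q_{\nu,l}(z)$ turn out to be $z$--independent constants times a single power of $z$, so the derivatives $p_{\nu,l}^{(m)}(z)$, $q_{\nu,l}^{(m)}(z)$ occurring in Theorem~\ref{main} are immediate from the power rule, and matching them against the closed forms $t_{\nu,l}^m$, $u_{\nu,l}^m$, $v_{\nu,l}^m$, $w_{\nu,l}^m$ finishes the proof.

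For the Struve functions: the recurrences for $\mathbf{H}_\nu(z)$ match (\ref{threethree})--(\ref{fourfour}) with $f_\nu(z)=-g_\nu(z)=-(\tfrac12 z)^\nu/\{\sqrt{\pi}\,\Gamma(\nu+\tfrac32)\}$, so $\mathbf{H}_\nu$ is of ``$G$--type''; those for $\mathbf{L}_\nu(z)$ match (\ref{oneone})--(\ref{twotwo}) with the same $f_\nu$ and $g_\nu$, so $\mathbf{L}_\nu$ is of ``$F$--type''. Feeding these into the definitions of $p_{\nu,l}$ and $q_{\nu,l}$, and using $(\tfrac12 z)^{\nu+l}z^{-\nu}=(\tfrac12)^{\nu+l}z^{l}$ and $z^{\nu}(\tfrac12 z)^{\nu-l}=(\tfrac12)^{\nu-l}z^{2\nu-l}$, one finds for $l\geq 1$ that $p_{\nu,l}(z)$ equals the constant occurring in $t_{\nu,l}^0(z)$ times $z^{l}$ and $q_{\nu,l}(z)$ equals the constant occurring in $u_{\nu,l}^0(z)$ times $z^{2\nu-l}$, while $p_{\nu,0}(z)=(\tfrac12)^\nu/\{\sqrt{\pi}\,\Gamma(\nu+\tfrac32)\}$ is constant (accounting for the Kronecker delta in $t_{\nu,0}^m$) and $q_{\nu,0}(z)=-\tfrac12 z^\nu(f_\nu(z)+g_\nu(z))=0$ (accounting for $u_{\nu,0}^m\equiv 0$). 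Differentiating these monomials $m$ times reproduces precisely $t_{\nu,l}^m(z)$ and $u_{\nu,l}^m(z)$, whence (\ref{wreck2})--(\ref{mend3}) yield the formulas for $\mathbf{H}_\nu$ and (\ref{wreck})--(\ref{mend1}) yield those for $\mathbf{L}_\nu$.

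For the Anger function: $\mathbf{J}_\nu(z)$ is of ``$G$--type'' with $f_\nu(z)=0$ and $g_\nu(z)=-2\sin(\pi\nu)/(\pi z)$, so $p_{\nu,l}(z)$ and $q_{\nu,l}(z)$ become constant multiples of $z^{-\nu-1}$ and $z^{\nu-1}$; differentiating $m$ times produces the Pochhammer factors and hence exactly $v_{\nu,l}^m(z)$ and $w_{\nu,l}^m(z)$ (with the $l=0$ cases, where $f_\nu$ and $g_\nu$ specialise, checked separately), and substitution into (\ref{wreck2})--(\ref{mend3}) gives the $\mathbf{J}_\nu$ formulas. The Weber function $\mathbf{E}_\nu(z)$ is handled identically: its recurrences have the same shape but with the trigonometric term $\sin(\pi\nu)$ replaced by $1-\cos(\pi\nu)$ (together with an accompanying change in the second inhomogeneous term), and propagating this through the definitions of $p_{\nu,l}$ and $q_{\nu,l}$ produces the substitution $\sin(\pi(\nu+k))\mapsto 1-\cos(\pi(\nu+k))$, $k\in\mathbb{Z}$, claimed in the statement.

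I expect the only genuine difficulty to be organisational rather than mathematical: keeping straight the sign patterns $(-1)^{n+k}$ that distinguish the ``$G$--type'' formulas (\ref{wreck2})--(\ref{mend3}) from the ``$F$--type'' ones (\ref{wreck})--(\ref{mend1}); reconciling the precise Pochhammer-and-power-rule coefficients and exponent shifts produced by $m$-fold differentiation of $z^{l}$, $z^{2\nu-l}$ and $z^{\pm\nu-1}$ with the stated closed forms; isolating the degenerate indices $l=0$; and, since these monomials fail to be entire when their exponents are non-integral, recording the cut plane on which the hypothesis $p_{\nu,l},q_{\nu,l}\in C^{2n}$ of Theorem~\ref{main} is to be understood (both sides being analytic there, the identities then hold by analytic continuation). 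Once this bookkeeping is in place, each of the four families of formulas follows from Theorem~\ref{main} with no further input.
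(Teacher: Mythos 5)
Your proposal is correct and follows essentially the same route as the paper: read off $f_{\nu}$ and $g_{\nu}$ from the Appendix recurrences (the same $f_{\nu}=-g_{\nu}$ for $\mathbf{H}_{\nu}$ and $\mathbf{L}_{\nu}$, and $f_{\nu}=0$ with the trigonometric $g_{\nu}$ for $\mathbf{J}_{\nu}$, $\mathbf{E}_{\nu}$), substitute into $p_{\nu,l}$ and $q_{\nu,l}$, differentiate the resulting monomials $m$ times, and invoke the appropriate formulas of Theorem~\ref{main}. Your extra remarks on the sign patterns, the $l=0$ degeneracies and the domain on which the smoothness hypothesis holds are sensible bookkeeping that the paper passes over silently, but they do not change the argument.
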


\begin{proof}We first establish the formulas for the $n$-th derivatives involving Struve functions.  The function $\mathbf{H}_{\nu}(z)$ satisfies the system of equations (\ref{threethree}) and (\ref{fourfour}), with $f_{\nu}(z)=-g_{\nu}(z)=-\frac{(\frac{1}{2}z)^{\nu}}{\sqrt{\pi}\Gamma(\nu+\frac{3}{2})}$ (see \cite{NIST}, Section 11.4).  Therefore
\begin{align*}p_{\nu,l}^{(m)}(z)&=\frac{\mathrm{d}^{m}}{\mathrm{d}z^m}\bigg(\frac{\nu}{2(\nu+l)}\frac{g_{\nu+l}(z)}{z^{\nu}}-\frac{f_{\nu+l}(z)}{2z^{\nu}}\bigg) =\frac{(2\nu+l)(\frac{1}{2})^{\nu+l+1}}{\sqrt{\pi}(\nu+l)\Gamma(\nu+l+\frac{3}{2})}\frac{d^m}{dz^m}(z^l) \\
&=\frac{(2\nu+l)(\frac{1}{2})^{\nu+l+1}}{\sqrt{\pi}(\nu+l)\Gamma(\nu+l+\frac{3}{2})}(l)_{m}z^{l-m} =t_{\nu,l}^m(z).
\end{align*}
Similarly, we can show that $q_{\nu,l}^{(m)}(z)=u_{\nu,l}^{m}(z)$.  We then apply formulas (\ref{wreck2})--(\ref{mend3}) of Theorem \ref{main}, and this gives the desired formulas for the derivatives of the functions $z^{-\nu}\mathbf{H}_{\nu}(z)$ and $z^{\nu}\mathbf{H}_{\nu}(z)$.  The formulas involving $\mathbf{L}_{\nu}(z)$ are derived similarly.

The Anger--Weber functions  $\mathbf{J}_{\nu}(z)$ and $\mathbf{E}_{\nu}(z)$ satisfy the system of equations (\ref{threethree}) and (\ref{fourfour}), with $f_{\nu}(z)=0$, $g_{\nu}(z)=-\frac{2}{\pi z}\sin(\pi \nu)$, and $f_{\nu}(z)=0$, $g_{\nu}(z)=-\frac{2}{\pi z}(1-\cos(\pi\nu))$, respectively (see \cite{NIST}, Section 11.10).  We then apply Theorem \ref{main} and some simple calculations, as we did in the proof of formulas for the $n$-th derivatives involving Struve functions, to obtain the desired formulas.
\end{proof}

\bibliographystyle{amsplain}

\end{document}